\documentclass[a4paper]{amsart}
\usepackage{amssymb}

\input xy
\xyoption{all}

\newcommand{\Id}{\mathrm{Id}}
\newcommand{\Comp}{\mathsf{Comp}}

\newcommand{\Cl}{\mathrm{Cl}}
\newcommand{\lin}{\mathrm{lin}}
\newcommand{\Int}{\mathrm{Int}}

\newcommand{\A}{\mathcal A}

\newcommand{\K}{\mathcal K}

\newcommand{\B}{\mathcal B}

\newcommand{\R}{\mathbb R}
\newcommand{\N}{\mathbb N}
\newcommand{\T}{\mathcal T}
\newcommand{\F}{\mathcal F}
\newcommand{\C}{\mathcal C}
\newcommand{\M}{\mathbb M}
\newcommand{\E}{\mathbb E}
\newcommand{\BK}{\mathcal {BK}}
\newcommand{\BT}{\mathcal {BT}}

\newtheorem{theorem}{Theorem}
\newtheorem{df}{Definition}
\newtheorem{lemma}{Lemma}

\begin{document}

\title{On balanced capacities}
\author{Taras Radul}

\maketitle

Institute of Mathematics, Casimirus the Great University of Bydgoszcz, Poland;
\newline
Department of Mechanics and Mathematics, Ivan Franko National University of Lviv,
Universytettska st., 1. 79000 Lviv, Ukraine.
\newline
e-mail: tarasradul@yahoo.co.uk

\textbf{Key words and phrases:}  Balanced capacity, core of a cooperative game,  fuzzy integral

\subjclass[MSC 2020]{28E10,91A10,52A01,54H25}

\begin{abstract} We consider  capacity (fuzzy measure, non-additive probability) on a compactum as a monotone  cooperative normed game. Then it is naturally to consider  probability measures as elements of core of such game.
We prove an analogue of Bondareva-Shapley theorem that non-emptiness of the core is equivalent to balancedness of the capacity. We investigate categorical properties of balanced capacities and give characterizations of some fuzzy integrals of  balanced capacities.
\end{abstract}

\maketitle

\section{Introduction}
Capacities (non-additive measures, fuzzy measures) were introduced by Choquet in \cite{Ch} as a natural generalization of additive measures. They found numerous applications (see for example \cite{EK},\cite{Gil},\cite{Sch}). Categorical and topological properties of spaces of upper-semicontinuous normed capacities on compact Hausdorff spaces were investigated in \cite{NZ}. In particular, there was built the capacity functor which is a functorial part of a capacity monad $\M$.

Capacities can be considered as monotone cooperative games. The important solution concept of a cooperative games is the core notion \cite{Gilies}. The core consists of additive set functions which dominate the characteristic function of the game. The Bondareva-Shapley Theorem (\cite{Bond} and \cite{Shap}) provides a characterization of games on finite set with non-empty core. It states that the core is non-empty if and only if the game is balanced.
The games on infinite sets were considered in \cite{Sch1}, \cite{Kan}, \cite{Kan1}, \cite{Pin}, \cite{BarPin}, where the core consists as of additive set functions so and of $\sigma$-additive set functions.

We consider in this paper a cooperative game on a compactum with the characteristic function represented by a  upper-semicontinuous normed capacity. It seems  natural to consider the core consisting of $\sigma$-additive regular normed measures (probability measures). We prove  an analogue of Bondareva-Shapley theorem that non-emptiness of the core is equivalent to balancedness of the capacity. We consider categorical properties of the space of balanced capacities $MBX$ in Section 2. We show that  $MB$ is a subfunctor of the capacity functor $M$ but $MB$ is not a submonad of the capacity monad. Finally, we give a characterization of some fuzzy integrals generated by balanced capacities in Section 3.

\section{Balanced capacities} By compactum we mean a compact Hausdorff space.  In what follows, all spaces are assumed to be compacta except for $\R$ and maps are assumed to be continuous. Let $A$ be a subset of $X$. By $\Cl A$ we denote the closure of $A$ in $X$. By $\F(X)$ we denote the family of all closed subsets of $X$.

For each compactum $X$ we denote by $C(X)$ the Banach space of all
continuous functions $\phi:X\to\R$ with the usual $\sup$-norm: $
\|\phi\| =\sup\{|\phi(x)|\mid x\in X\}$. We also consider on $C(X)$ the natural partial order. For $\lambda\in\R$ we denote by $\lambda_X$ the constant function on $X$ with values $\lambda$.   By $C^+(X)$ we denote the set of all functions from $C(X)$ which are  $\ge 0_X$.

We need the definition of capacity on a compactum $X$. We follow a terminology of \cite{NZ}.
A function $\nu:\F(X)\to [0,1]$  is called an {\it upper-semicontinuous capacity} on $X$ if the three following properties hold for each closed subsets $F$ and $G$ of $X$:

1. $\nu(X)=1$, $\nu(\emptyset)=0$,

2. if $F\subset G$, then $\nu(F)\le \nu(G)$,

3. if $\nu(F)<a$, then there exists an open set $O\supset F$ such that $\nu(B)<a$ for each compactum $B\subset O$.

If $F$ is a one-point set we use a simpler notation $\nu(a)$ instead $\nu(\{a\})$.
A capacity $\nu$ is extended in \cite{NZ} to all open subsets $U\subset X$ by the formula $\nu(U)=\sup\{\nu(K)\mid K$ is a closed subset of $X$ such that $K\subset U\}$.

It was proved in \cite{NZ} that the space $MX$ of all upper-semicontinuous  capacities on a compactum $X$ is a compactum as well, if a topology on $MX$ is defined by a subbase that consists of all sets of the form $O_-(F,a)=\{c\in MX\mid c(F)<a\}$, where $F$ is a closed subset of $X$, $a\in [0,1]$, and $O_+(U,a)=\{c\in MX\mid c(U)>a\}$, where $U$ is an open subset of $X$, $a\in [0,1]$. Since all capacities we consider here are upper-semicontinuous, in the following we call elements of $MX$ simply capacities.

The following definition is a topological analogue of the definition of balanced game given in \cite{Sch1}.

\begin{df}\label{bal} A capacity $\nu\in MX$ is called balanced if we have $\sum_{i=1}^n\lambda_i\nu(A_i)\le 1$ for each numbers $\lambda_1,\dots,\lambda_n\in\R$ and closed subsets $A_1,\dots,A_n$ of $X$ such that $\sum_{i=1}^n\lambda_i\chi_{A_i}\le 1_X$ where $\chi_{A_i}$ is the characteristic function of the set $A_i$.
\end{df}

\begin{lemma}\label{roz} For each closed subsets $A_1,\dots,A_n$ of a compactum $X$ such that   $\sum_{i=1}^n\lambda_i\chi_{A_i}\le 1_X$ there exist closed subsets  $B_1,\dots,B_n$ of $X$ such that $\sum_{i=1}^n\lambda_i\chi_{B_i}\le 1_X$ and $A_i\subset \Int B_i$.
\end{lemma}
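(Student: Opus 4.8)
The plan is to reduce first to the case of positive coefficients and then to enlarge the sets $A_i$ by a single compactness-plus-shrinking construction, rather than handling one index at a time.

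First I would dispose of the non-positive coefficients. Put $P=\{i:\lambda_i>0\}$ and, for $i\notin P$, set $B_i=X$: since $\lambda_i\le 0$ we have $\lambda_i\chi_X\le\lambda_i\chi_{A_i}$ pointwise, so these terms only decrease the left-hand side and certainly satisfy $A_i\subset\Int X$. A one-line pointwise estimate (using that $\lambda_i(\chi_{A_i}-1_X)\ge 0$ when $\lambda_i\le 0$) shows that the hypothesis yields $\sum_{i\in P}\lambda_i\chi_{A_i}\le c_X$ with $c=1-\sum_{i\notin P}\lambda_i\ge 1$. It then suffices to find, for $i\in P$, closed $B_i$ with $A_i\subset\Int B_i$ and $\sum_{i\in P}\lambda_i\chi_{B_i}\le c_X$, because adding back the constant $\sum_{i\notin P}\lambda_i$ restores the bound $1_X$. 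So from now on all $\lambda_i>0$.

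The main step is the construction of the $B_i$. For each $x\in X$ let $I(x)=\{i:x\in A_i\}$, so $\sum_{i\in I(x)}\lambda_i\le c$, and let $G_x=\bigcap_{i\notin I(x)}(X\setminus A_i)$ be the corresponding open neighbourhood of $x$; note $G_x\cap A_i=\emptyset$ for $i\notin I(x)$. Pick a finite subcover $X=\bigcup_{s=1}^k G_{x_s}$, write $G_s=G_{x_s}$, $I_s=I(x_s)$, and pass to a shrinking: since a compactum is normal, there are open $G'_s$ with $\Cl G'_s\subset G_s$ and $X=\bigcup_s G'_s$. Set
\[
B_i=X\setminus\bigcup\{G'_s:i\notin I_s\}.
\]
Each $B_i$ is closed; since $\Cl G'_s\subset G_s$ is disjoint from $A_i$ whenever $i\notin I_s$, we get $A_i\subset X\setminus\bigcup\{\Cl G'_s:i\notin I_s\}=\Int B_i$; and for any $y\in X$, choosing $s_0$ with $y\in G'_{s_0}$, the relation $y\in B_i$ forces $i\in I_{s_0}$, whence $\sum_{i:\,y\in B_i}\lambda_i\le\sum_{i\in I_{s_0}}\lambda_i\le c$. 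This is precisely $\sum_{i\in P}\lambda_i\chi_{B_i}\le c_X$, which completes the argument.

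The main obstacle is that one cannot enlarge the sets independently: fattening $A_i$ consumes part of the "budget" on the overlap regions and so interacts with the other sets. The device that resolves this is to localize first (along $G_x$ the index set $I(\cdot)$ can only shrink) and then to shrink the resulting finite cover, so that the closed sets $B_i$ admit a description both from inside (yielding $A_i\subset\Int B_i$) and from outside (yielding, at each point $y$, control by a single index set $I_{s_0}$). The sign reduction at the start is minor but genuinely needed, since dropping the negative terms from the sum at a point would otherwise invalidate the last inequality.
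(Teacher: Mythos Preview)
The paper states this lemma without proof, so there is nothing to compare against directly. Your argument is correct and complete: the reduction to positive coefficients (setting $B_i=X$ when $\lambda_i\le 0$ and absorbing those terms into the constant $c=1-\sum_{i\notin P}\lambda_i\ge 1$) is sound, and the main construction---localizing via $G_x=\bigcap_{i\notin I(x)}(X\setminus A_i)$, extracting a finite subcover, shrinking it to $G'_s$ with $\Cl G'_s\subset G_s$, and setting $B_i=X\setminus\bigcup\{G'_s:i\notin I_s\}$---does exactly what is needed. The two verifications are clean: for $A_i\subset\Int B_i$ you use that $\Cl G'_s\subset G_s$ misses $A_i$ whenever $i\notin I_s$ and that the closure of a finite union is the union of closures; for the pointwise bound you trap each $y$ in some $G'_{s_0}$ and observe $\{i:y\in B_i\}\subset I_{s_0}$, whence $\sum_{i:y\in B_i}\lambda_i\le\sum_{i\in I_{s_0}}\lambda_i\le c$. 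Nothing is missing.
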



By probability measure on a compactum $X$ we mean normed $\sigma$-additive regular Borel measure. By Riesz Theorem we can alternatively describe a probability measure as linear  positively defined functional
$\mu:C(X)\to\R$ with norm $1$. The correspondence could by described as follows $\mu(A)=\inf \{\mu(\varphi)\mid \varphi\in C(X) $ with $\varphi\ge\chi_{A}\}$ for a closed subset $A$ of $X$. Let us remark that linear positively defined functional $\mu$ has norm $1$ if and only if $\mu(1_X)=1$. One can find more information about probability measures on compacta in \cite{Fed}. We identify representation of probability measures as set functions and as functionals in the following.

\begin{df}\label{core}  Let $\nu\in MX$ be a capacity. We say that a probability measure $\mu$ belongs to the core of $\nu$ if $\mu(A)\ge\nu(A)$ for each closed subset $A$ of $X$.
\end{df}

The following theorem is a topological analogue of the definition of Bondareva-Shapley Theorem. We use in the proof the idea of the proof of Han-Banach Theorem which was also used in \cite{Sch1}.

\begin{theorem} The core of a capacity $\nu$ is non-empty if and only if $\nu$ is balanced.
\end{theorem}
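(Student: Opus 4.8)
The plan is to prove the two implications separately: that a nonempty core forces balancedness (routine), and that balancedness produces a point of the core via a Hahn--Banach type extension argument, exactly in the spirit of \cite{Sch1}. Throughout, the balancing coefficients are taken nonnegative, as in the classical Bondareva--Shapley setting.

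For the first implication, suppose $\mu$ is a probability measure in the core of $\nu$, and take nonnegative reals $\lambda_1,\dots,\lambda_n$ and closed sets $A_1,\dots,A_n$ with $\sum_{i=1}^n\lambda_i\chi_{A_i}\le 1_X$. Since $\nu(A_i)\le\mu(A_i)=\int_X\chi_{A_i}\,d\mu$ and $\lambda_i\ge 0$,
\[
\sum_{i=1}^n\lambda_i\nu(A_i)\ \le\ \sum_{i=1}^n\lambda_i\mu(A_i)\ =\ \int_X\Big(\sum_{i=1}^n\lambda_i\chi_{A_i}\Big)\,d\mu\ \le\ \int_X 1_X\,d\mu\ =\ 1,
\]
so $\nu$ is balanced.

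For the converse, assume $\nu$ is balanced and define $p\colon C(X)\to\R$ by
\[
p(\varphi)=\sup\Big\{\ \sum_{i=1}^n\lambda_i\nu(A_i)+t\ :\ n\in\N,\ \lambda_i\ge 0,\ t\in\R,\ A_i\in\F(X),\ \varphi\ge\sum_{i=1}^n\lambda_i\chi_{A_i}+t\cdot 1_X\ \Big\}.
\]
Taking all $\lambda_i=0$ and $t=\min\varphi$ gives $p(\varphi)\ge\min\varphi$; and if $\varphi\ge\sum\lambda_i\chi_{A_i}+t\cdot 1_X$ with $\lambda_i\ge 0$, then, setting $M=\max\varphi$ (so $M\ge t$) and applying balancedness to $\sum_i\frac{\lambda_i}{M-t}\chi_{A_i}\le 1_X$ when $M>t$ (the case $M=t$ being trivial), one gets $\sum_i\lambda_i\nu(A_i)+t\le M$. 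Hence $\min\varphi\le p(\varphi)\le\max\varphi$, so $p$ is finite, and one checks $p(t\cdot 1_X)=t$ for all $t\in\R$ (this uses $\nu(X)=1$). The functional $p$ is positively homogeneous and superadditive (concatenate admissible data for $\varphi$ and $\psi$), so $-p$ is sublinear; on the line $\R\cdot 1_X$ the linear functional $t\cdot 1_X\mapsto -t$ coincides with $-p$, so extending it by Hahn--Banach under the sublinear bound $-p$ yields a linear $F\colon C(X)\to\R$ with $F\le -p$. Setting $\mu=-F$, we obtain a linear functional with $\mu\ge p$ and $\mu(1_X)=1$.

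Finally, if $\varphi\ge 0_X$ then $\mu(\varphi)\ge p(\varphi)\ge\min\varphi\ge 0$, so $\mu$ is a positive linear functional, and since $\mu(1_X)=1$ it is a probability measure; and for a closed set $A$ and any $\varphi\in C(X)$ with $\varphi\ge\chi_A$ we have $p(\varphi)\ge\nu(A)$ (take $\lambda_1=1$, $A_1=A$, $t=0$), whence $\mu(\varphi)\ge\nu(A)$, so that $\mu(A)=\inf\{\mu(\varphi):\varphi\ge\chi_A\}\ge\nu(A)$; thus $\mu$ lies in the core. The step I expect to be the crux is the inequality $p(\varphi)\le\max\varphi$: this is the single point at which balancedness enters the difficult implication, while superlinearity of $p$, the normalization $\mu(1_X)=1$, and positivity of $\mu$ are formal. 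Lemma~\ref{roz} is convenient if one prefers to argue instead through the finite intersection property of the compact space of probability measures, since it lets a finite family of closed sets be enlarged to one with $A_i\subset\Int B_i$ and thereby reduces a finite instance of the problem to a finite-dimensional linear programming problem.
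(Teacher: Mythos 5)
Your proof is correct, and for the difficult direction it follows the same Schmeidler-style Hahn--Banach strategy as the paper; the differences are in the packaging, and they are worth recording because your version is tighter at the two places where the paper's write-up is loosest. First, in the necessity direction you integrate the Borel function $\sum_i\lambda_i\chi_{A_i}$ directly against the regular Borel measure $\mu$, which makes Lemma~\ref{roz} and the Urysohn functions $\varphi_i$ unnecessary for this implication (the paper needs them only because it insists on evaluating $\mu$ on continuous functions); note that both your argument and the paper's require $\lambda_i\ge 0$, so you are right to read Definition~\ref{bal} with nonnegative coefficients --- with genuinely signed $\lambda_i$ the necessity direction would fail, since taking $\pm t(\chi_{A_1}+\chi_{A_2}-\chi_{A_1\cap A_2}-\chi_{A_1\cup A_2})$ would force $\nu$ to be modular. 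Second, in the sufficiency direction you define $p$ on all of $C(X)$ by allowing an additive constant $t\cdot 1_X$ in the minorant, verify $\min\varphi\le p(\varphi)\le\max\varphi$ (the upper bound being exactly where balancedness enters), and then invoke the standard Hahn--Banach theorem with the sublinear majorant $-p$; the paper instead defines $p$ only on $C(X,[0,1))$ (with a formula whose constraint should read $\sum_i\lambda_i\chi_{A_i}\le f$) and redoes the one-step extension plus Zorn's lemma by hand, which creates domain bookkeeping your formulation avoids. Your final step, $\mu(A)=\inf\{\mu(\varphi):\varphi\ge\chi_A\}\ge\nu(A)$, also makes explicit the passage from the extended functional back to the set function, which the paper leaves implicit. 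In short: same route, but a cleaner and more complete execution.
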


\begin{proof} Necessity.  Consider any probability measure  $\mu$ belonging to the core of $\nu$. Take any numbers $\lambda_1,\dots,\lambda_n\in\R$ and closed subsets $A_1,\dots,A_n$ of $X$ such that $\sum_{i=1}^n\lambda_i\chi_{A_i}\le 1_X$. By Lemma \ref{roz} we can choose closed subsets  $B_1,\dots,B_n$ of $X$ such that $\sum_{i=1}^n\lambda_i\chi_{B_i}\le 1_X$ and $A_i\subset \Int B_i$. For each $i\in\{1,\dots,n\}$ choose a continuous function $\varphi_i:X\to\R$ such that $\varphi_i(a)=1$ for each $a\in A_i$ and $\varphi_i(b)=0$ for each $b\in X\setminus \Int B_i$.   Then we have $\sum_{i=1}^n\lambda_i\nu(A_i)\le\sum_{i=1}^n\lambda_i\mu(A_i)
\le\sum_{i=1}^n\lambda_i\mu(\varphi_i)=\mu(\sum_{i=1}^n\lambda_i\varphi_i)$. Since $\sum_{i=1}^n\lambda_i\varphi_i\le\sum_{i=1}^n\lambda_i\chi_{B_i}\le 1_X$, we have $\mu(\sum_{i=1}^n\lambda_i\varphi_i)\le  1$.

Sufficiency. Let $\nu$ be a balanced capacity. Define a functional $p:C(X,[0,1))\to [0,+\infty)$ by the formula $p(f)=\sup\{\sum_{i=1}^n\lambda_i\nu(A_i)\mid n\in\N$, $A_i$ are closed subsets of $X$ and $\lambda_i$ are non-negative numbers such that $\sum_{i=1}^n\lambda_i\chi_{A_i}\le 1_X\}$. It is easy to see that $p$ is superadditive, $p(\lambda\phi)=\lambda p(\phi)$ for each $\lambda\ge 0$, $\phi\in C(X,[0,1))$  and $p(1_X)=1$.

Consider any linear subspace $Y\subset C(X)$ such that $Y$ contains all constants. Suppose we have built a linear functional $\mu:Y\to \R$ such that $\mu(1_X)=1$ and $\mu(\varphi)\ge  p(\varphi)$ for each $\varphi\in Y\cap C(X,[0,1))$. Consider any $\phi\in C(X)\setminus Y$.  We can  extend $\mu$ to a linear functional $\mu'$ defined on the linear space $\lin (\{\phi\}\cup Y)=\{\lambda\phi+y\mid y\in Y$ and $\lambda\in\R\}$ putting $\mu'(\lambda\phi+y)=  \lambda c+\mu(y)$.

We need to show that $c$ may be chosen so that   $\lambda c+\mu(y)\ge p(\lambda\phi+y)$ for each $\lambda\phi+y)\in C(X,[0,1))$. Consider any $\lambda>0$. Then $p(\lambda\phi+y)=\lambda p(\phi+y/\lambda)$. Put $y/\lambda=t\in Y$. If $\lambda<0$, we have $p(\lambda\phi+y)=-\lambda p(-\phi-y/\lambda)$ and we put $-y/\lambda=z\in Y$.

It is enough to choose $c\in\R$ so that for each $z,t\in Y$ with $t+\phi\ge 0$ and $z-\phi\ge 0$ the following two inequalities are satisfied $c+\mu(t)\ge p(t+\phi)$ and $-c+\mu(z)\ge p(z-\phi)$. It can be rewritten as follows $p(t+\phi)-\mu(t)\le c\le\mu(z)- p(z-\phi)$.

Existence of such $c$ follows from following inequalities $p(t+\phi)+p(z-\phi)\le p(t+z)\le\mu(t+z)\le\mu(t)+\mu(z)$. Let us remark that the inequality $p(t+\phi)+p(z-\phi)\le p(t+z)$ follows from the superadditivity of $p$ and the inequality $p(t+z)\le\mu(t+z)$ follows from $t,z\in Y$.

Zorn Lemma implies that there exists a linear functional $\mu:C(X)\to\R$ such that $\mu(1_X)=1$ and $\mu(\varphi)\ge  p(\varphi)$ for each $\varphi\in  C(X,[0,1))$. The last property implies that $\mu$ is positively defined, hence $\mu$ is a probability measure.
\end{proof}

\section{Categorical properties of balanced capacities}

By $\Comp$ we denote the category of compact Hausdorff
spaces (compacta) and continuous maps.
 For a continuous map of compacta $f:X\to Y$ we define the map $Mf:MX\to MY$ by the formula $Mf(\nu)(A)=\nu(f^{-1}(A))$ where $\nu\in MX$ and $A\in\F(Y)$. The map $Mf$ is continuous.  In fact, this extension of the construction $M$ defines the capacity functor $M$ in the category $\Comp$ (see \cite{NZ} for more detailes).

We recall the notion is the notion of monad (or triple) in the sense of S.Eilenberg and J.Moore \cite{EM}.  We define them only for the category $\Comp$.

A {\it monad} \cite{EM} $\E=(E,\eta,\mu)$ in the category $\Comp$ consists of an endofunctor $E:{\Comp}\to{\Comp}$ and natural transformations $\eta:\Id_{\Comp}\to F$ (unity), $\mu:F^2\to F$ (multiplication) satisfying the relations $\mu\circ E\eta=\mu\circ\eta E=${\bf 1}$_E$ and $\mu\circ\mu E=\mu\circ E\mu$. (By $\Id_{\Comp}$ we denote the identity functor on the category ${\Comp}$ and $E^2$ is the superposition $E\circ E$ of $E$.)

The functor $M$ was completed to the monad $\M=(M,\eta,\mu)$ \cite{NZ}, where the components of the  natural transformations are defined as follows: $\eta X(x)(F)=1$ if $x\in F$ and $\eta X(x)(F)=0$ if $x\notin F$;
$\mu X(\C)(F)=\sup\{t\in[0,1]\mid \C(\{c\in MX\mid c(F)\ge t\})\ge t\}$, where $x\in X$, $F$ is a closed subset of $X$ and $\C\in M^2(X)$.

For a compactum $X$ we denote by $MBX$ the subspace of $MX$ consisting of balanced capacities. The aim of this section is to show that $MB$ forms subfunctor of the functor $M$ but is not a submonad.

\begin{lemma}\label{closed} The set $MBX$ is closed in $MX$.
\end{lemma}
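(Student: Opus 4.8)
We need to show that the set of balanced capacities is closed in the compactum $MX$. The natural approach is to show that the complement $MX \setminus MBX$ is open, i.e., if $\nu$ is *not* balanced, then there is a neighborhood of $\nu$ in $MX$ consisting entirely of non-balanced capacities.

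So suppose $\nu \in MX$ is not balanced. By Definition \ref{bal}, there exist real numbers $\lambda_1, \dots, \lambda_n$ and closed sets $A_1, \dots, A_n \subset X$ with $\sum_{i=1}^n \lambda_i \chi_{A_i} \le 1_X$ but $\sum_{i=1}^n \lambda_i \nu(A_i) > 1$. The obstacle is that the subbasic neighborhoods in $MX$ are of the form $O_-(F,a)$ (controlling $c(F)$ from above) and $O_+(U,a)$ (controlling $c(U)$ from below on open sets), so I can't directly pin down $c(A_i)$ from below for the *closed* sets $A_i$.

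The key step is to use Lemma \ref{roz} to "fatten" the configuration. Actually, I want to go the other way: I want to *shrink* slightly. Hmm, let me reconsider. The witness inequality is $\sum \lambda_i \nu(A_i) > 1$. Split the index set into $P = \{i : \lambda_i \ge 0\}$ and $N = \{i : \lambda_i < 0\}$. For $i \in N$, since $\lambda_i < 0$, I need $c(A_i)$ bounded *above* to keep $\sum \lambda_i c(A_i)$ large — and $O_-(A_i, a)$ does exactly that. For $i \in P$, I need $c$ on something contained in $A_i$ bounded *below*. Here I apply Lemma \ref{roz} in reverse: since $\sum \lambda_i \chi_{A_i} \le 1_X$ with a strict gap built into the $i \in P$ terms, I can find closed sets $B_i \subset \Int A_i$ for $i \in P$ (shrinking), with $\nu(B_i)$ still close enough to $\nu(A_i)$; actually more carefully, I replace each $A_i$ ($i\in P$) by open $U_i$ with $\Cl U_i \subset$ a slightly larger closed set, using property 3 of capacities to control the gain. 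Then the condition $c \in O_+(U_i, \nu(A_i) - \delta)$ for $i \in P$ and $c \in O_-(A_i, \nu(A_i) + \delta)$ for $i \in N$, for suitably small $\delta > 0$, defines an open neighborhood of $\nu$; and for any $c$ in it, $\sum_{i\in P} \lambda_i c(U_i) + \sum_{i \in N} \lambda_i c(A_i) > 1$ while $\sum_{i\in P}\lambda_i \chi_{U_i} + \sum_{i\in N}\lambda_i \chi_{A_i} \le 1_X$ (using $\chi_{U_i} \le \chi_{A_i}$, and $c(U_i) \le c(\Cl U_i)$). Since $c(U_i)$ equals $\sup$ over closed subsets $K \subset U_i$ of $c(K)$, we can extract closed sets witnessing non-balancedness of $c$.

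I expect the main obstacle to be the bookkeeping of strict inequalities: I must choose the shrinking of the positive-coefficient sets and the $\delta$ so that (a) the modified functional configuration still satisfies $\sum \lambda_i \chi_{(\cdot)_i} \le 1_X$ pointwise, (b) the sum $\sum \lambda_i \nu(\cdot_i)$ stays strictly above $1$, and (c) the resulting set of conditions is genuinely a subbasic-open neighborhood of $\nu$. Point (a) is the delicate one: shrinking $A_i$ to $U_i \subsetneq A_i$ only helps the inequality (since $\chi_{U_i} \le \chi_{A_i}$ and $\lambda_i > 0$), so in fact no fattening of the negative terms is needed and (a) is automatic; the real care is in (b), where I invoke property 3 of the capacity (upper semicontinuity) to guarantee that shrinking $A_i$ for $i\in P$ loses only an arbitrarily small amount of $\nu$-mass, together with continuity/monotonicity to choose $\delta$. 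Once $\delta$ is fixed smaller than the slack $\sum \lambda_i \nu(A_i) - 1$ split appropriately among the finitely many terms, the argument closes.
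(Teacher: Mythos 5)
Your overall strategy --- show the complement is open by perturbing a witness configuration into subbasic neighbourhoods and re-extracting closed witnesses --- is the right one, and your treatment of the negative-coefficient indices via $O_-(A_i,\nu(A_i)+\delta)$ is fine. But the step you finally commit to for the positive-coefficient sets is wrong. You shrink each $A_i$ ($i\in P$) to an open $U_i\subset\Int A_i$ and invoke ``property 3'' to guarantee that shrinking loses only an arbitrarily small amount of $\nu$-mass, i.e.\ that $\nu(U_i)=\sup\{\nu(K)\mid K\subset U_i \text{ closed}\}>\nu(A_i)-\delta$. Property 3 is \emph{upper} semicontinuity: it controls the gain when you \emph{enlarge} a closed set to an open neighbourhood, and gives no inner approximation at all. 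Capacities in $MX$ need not have any: on $X=[0,1]$ the set function $\nu(F)=1$ if $F\supset[0,1/2]$ and $\nu(F)=0$ otherwise lies in $MX$, yet $\nu(U)=0$ for every open $U\subset\Int[0,1/2]$ while $\nu([0,1/2])=1$. So the condition $c\in O_+(U_i,\nu(A_i)-\delta)$ need not be satisfied by $\nu$ itself --- your requirement (c), which you list but never verify, is exactly where the argument breaks. (The proposal is also internally inconsistent: the ``actually more carefully'' aside enlarges $A_i$, while the parenthetical ``$\chi_{U_i}\le\chi_{A_i}$'' and the final paragraph shrink it; you never settle on one direction.)

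The fix is the fattening you briefly mention and then abandon, and it is what the paper does: by Lemma \ref{roz} choose open $O_i\supset A_i$ with $\sum_i\lambda_i\chi_{O_i}\le 1_X$; the neighbourhood $\bigcap_i O_+(O_i,\nu(A_i)-\delta_i)$ contains $\nu$ by mere monotonicity ($\nu(O_i)\ge\nu(A_i)$ --- no semicontinuity needed), and any $c$ in it admits closed $B_i\subset O_i$ with $c(B_i)>\nu(A_i)-\delta_i$; the constraint survives because $\chi_{B_i}\le\chi_{O_i}$, and with $\delta_i=(a-1)/(n\lambda_i)$ one gets $\sum_i\lambda_i c(B_i)>1$. (For what it is worth, the paper silently assumes the witness has positive coefficients; your explicit handling of negative ones would be a sensible addition on top of the corrected positive-index step.)
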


\begin{proof} Take any $\nu\in MX\setminus MBX$. Then there exist positive numbers $\lambda_1,\dots,\lambda_n\in\R$ and closed subsets $A_1,\dots,A_n$ of $X$ such that $\sum_{i=1}^n\lambda_i\chi_{A_i}\le 1_X$ and $\sum_{i=1}^n\lambda_i\nu(A_i)=a> 1$. Put $\delta_i=\frac{a-1}{n\lambda_i}>0$ for $i\in\{1,\dots,n\}$. By Lemma \ref{roz} we can choose for each $i\in\{1,\dots,n\}$ an  open set $O_i$ such that $\sum_{i=1}^n\lambda_i\chi_{O_i}\le 1_X$ and $A_i\subset O_i$.

Put $V_i=\{\mu\in MX\mid \mu(O_i)>\nu(A_i)-\delta_i\}$ and $V=\cap_{i=1}^n V_i$. Then $V$ is an open subset of $MX$ containing $\nu$. Consider any $\mu\in V$. Then, for each $i\in\{1,\dots,n\}$, there exists a closed subset $B_i$ of $X$ such that  $B_i\subset O_i$ and $\mu(B_i)>\nu(A_i)-\delta_i$. Then we have $\sum_{i=1}^n\lambda_i\chi_{B_i}\le 1_X$ and $\sum_{i=1}^n\lambda_i\nu(B_i)> 1$. Hence $\mu\notin MBX$.
\end{proof}

\begin{lemma}\label{sub} Let $f:X\to Y$ be a continuous map between compacta $X$ and $Y$. Then we have  $Mf(MBX)\subset MBY$.
\end{lemma}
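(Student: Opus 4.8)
The plan is to unwind the definitions and reduce a balancedness inequality for $Mf(\nu)$ on $Y$ to a balancedness inequality for $\nu$ on $X$ via the preimage operation. Concretely, suppose $\lambda_1,\dots,\lambda_n\in\R$ and closed sets $A_1,\dots,A_n\subset Y$ satisfy $\sum_{i=1}^n\lambda_i\chi_{A_i}\le 1_Y$; I want to show $\sum_{i=1}^n\lambda_i Mf(\nu)(A_i)\le 1$. By definition $Mf(\nu)(A_i)=\nu(f^{-1}(A_i))$, so the target inequality is exactly $\sum_{i=1}^n\lambda_i\nu(f^{-1}(A_i))\le 1$.

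First I would note that each $f^{-1}(A_i)$ is a closed subset of $X$, since $f$ is continuous and $A_i$ is closed. The key remaining step is the pointwise inequality $\sum_{i=1}^n\lambda_i\chi_{f^{-1}(A_i)}\le 1_X$. This follows because for any $x\in X$ one has $\chi_{f^{-1}(A_i)}(x)=\chi_{A_i}(f(x))$ (the indicator of a preimage is the pullback of the indicator), hence $\sum_{i=1}^n\lambda_i\chi_{f^{-1}(A_i)}(x)=\sum_{i=1}^n\lambda_i\chi_{A_i}(f(x))\le 1$, using the assumed inequality on $Y$ evaluated at the point $f(x)$. With this in hand, balancedness of $\nu$ (Definition \ref{bal}) applied to the closed sets $f^{-1}(A_1),\dots,f^{-1}(A_n)$ and the same coefficients $\lambda_1,\dots,\lambda_n$ gives $\sum_{i=1}^n\lambda_i\nu(f^{-1}(A_i))\le 1$, which is what we wanted.

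Since $Mf(\nu)\in MY$ always holds (this is part of the capacity functor construction recalled above), and we have just verified the defining balancedness condition for it, we conclude $Mf(\nu)\in MBY$. As $\nu\in MBX$ was arbitrary, $Mf(MBX)\subset MBY$.

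I do not expect any serious obstacle here: the argument is essentially the observation that pulling back characteristic functions along $f$ commutes with forming linear combinations and preserves the bound by $1$, so the whole balancedness condition transports contravariantly. The only mild point of care is to record that $f^{-1}$ sends closed sets to closed sets, so that Definition \ref{bal} is genuinely applicable on $X$; no appeal to Lemma \ref{roz} or to the topology of $MX$ is needed for this particular statement.
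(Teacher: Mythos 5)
Your proof is correct and is essentially identical to the paper's own argument: both pull back the closed sets via $f^{-1}$, observe that $\chi_{f^{-1}(A_i)}=\chi_{A_i}\circ f$ so the bound $\le 1_X$ transfers, and then invoke balancedness of the capacity on $X$. No issues.
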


\begin{proof} Take any $\nu\in Mf(MBX)$. Then there exists $\mu\in MBX$ such that $Mf(\mu)=\nu$. Take any positive numbers $\lambda_1,\dots,\lambda_n\in\R$ and closed subsets $A_1,\dots,A_n$ of $Y$ such that $\sum_{i=1}^n\lambda_i\chi_{A_i}\le 1_Y$. Put $B_i=f^{-1}(A_i)$. We have $\sum_{i=1}^n\lambda_i\chi_{B_i}(x)=\sum_{i=1}^n\lambda_i\chi_{A_i}(f(x))$ for each $x\in X$, hence $\sum_{i=1}^n\lambda_i\chi_{B_i}\le 1_X$. Thus $\sum_{i=1}^n\lambda_i\nu(A_i)=\sum_{i=1}^n\lambda_i\mu(B_i)\le 1$ and $\nu\in MBY$.
\end{proof}

Lemmas \ref{closed} and \ref{sub} imply that $MB$ is a subfunctor of $M$.
However, we will show that  $MB$ does not form a submonad of $\M$.

Consider the compactum $X=\{1,2,3,4,5,6\}$ and denote $A_1=\{1,2,3\}$, $A_2=\{1,4,5\}$, $A_3=\{2,5,6\}$, $A_4=\{3,4,6\}$. Let $\nu_0\in MX$ be a minimal capacity such that  $\nu_0(A_j)=2/3$ for each $j\in\{1,2,3,4\}$. Since $|\{j\in \{1,2,3,4\}\mid x\in A_j\}|=2$ for each $x\in X$, we have  $\sum_{i=1}^41/2\chi_{A_i}= 1_X$. We also have $\sum_{i=1}^41/2\nu_0(A_i)=4/3>1$, hence $\nu_0\notin MBX$.

Denote $B^{t}=\{\nu\in MBX\mid \nu(B)\ge t\}$ for each closed subset $B\subset X$ and $t\in[0,1]$. Let $\A\in M(MB(X))$ be a minimal capacity such that $\A(A_i^{2/3})=2/3$ for each $i\in\{1,2,3,4\}$.

\begin{lemma}\label{od} $\mu_X(\A)=\nu_0$.
\end{lemma}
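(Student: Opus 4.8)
The plan is to verify $\mu_X(\A)=\nu_0$ by evaluating both capacities on an arbitrary closed set $F\subseteq X$, using explicit descriptions of the two minimal capacities involved. First I would record those descriptions. If $C_1,\dots,C_m$ are closed subsets of a compactum $Z$ and $t_1,\dots,t_m\in[0,1]$, then any capacity $\nu$ with $\nu(C_k)\ge t_k$ satisfies $\nu(Z)=1$ and, by monotonicity, $\nu(G)\ge t_k$ whenever $C_k\subseteq G$; moreover the assignment $\nu(G)=\max(\{0\}\cup\{t_k\mid C_k\subseteq G\})$ for $G\ne Z$, together with $\nu(Z)=1$, is an upper-semicontinuous capacity (u.s.c.\ being checked by deleting finitely many points of $Z$), so it is the minimal one. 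For $\nu_0$ this gives $\nu_0(X)=1$, $\nu_0(F)=2/3$ when $F\ne X$ and $A_j\subseteq F$ for some $j$, and $\nu_0(F)=0$ otherwise; for $\A$ it gives $\A(MBX)=1$, $\A(\mathcal S)=2/3$ when $\mathcal S\ne MBX$ and $A_i^{2/3}\subseteq\mathcal S$ for some $i$, and $\A(\mathcal S)=0$ otherwise. One should check here that each $A_i^{2/3}$ is a proper subset of $MBX$, so that $\A$ takes the value $1$ only on $MBX$; this follows from the fact, recorded next, that every point mass lies in $MBX$.

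Next I would unwind the multiplication of the monad. By Lemma \ref{closed}, $MBX$ is a closed, hence compact, subspace of $MX$; transporting $\A$ into $M^2X$ along the inclusion $i\colon MBX\hookrightarrow MX$, one has $Mi(\A)(\mathcal S)=\A(\mathcal S\cap MBX)$ for closed $\mathcal S\subseteq MX$. Since $\{c\in MX\mid c(F)\ge t\}$ is the complement of the subbasic open set $O_-(F,t)$, it is closed, and its intersection with $MBX$ is exactly $F^t$; hence $\mu_X(\A)(F)=\sup\{t\in[0,1]\mid \A(F^t)\ge t\}$. The one elementary fact that drives the rest is that every Dirac measure $\delta_x$ ($x\in X$) is a balanced capacity: as a capacity $\delta_x$ is the unit $\eta X(x)$ with $\eta X(x)(A)=\chi_A(x)$, so the defining inequality $\sum_i\lambda_i\chi_{A_i}\le 1_X$ evaluated at $x$ reads $\sum_i\lambda_i\eta X(x)(A_i)\le 1$. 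Thus $\delta_x\in MBX$ for every $x\in X$, with $\delta_x(B)=\chi_B(x)$ for closed $B\subseteq X$.

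I would then finish by cases on $F$. If $F=X$ then $F^t=MBX$ for all $t\in[0,1]$, so $\mu_X(\A)(X)=1=\nu_0(X)$. Assume $F\ne X$ and pick $x_0\in X\setminus F$: then $\delta_{x_0}\in MBX$ with $\delta_{x_0}(F)=0$, so $\delta_{x_0}\notin F^t$ for every $t>0$; hence $F^t\ne MBX$, so $\A(F^t)\le 2/3$ for all $t>0$ and therefore $\mu_X(\A)(F)\le 2/3$. If moreover $A_j\subseteq F$ for some $j$, then any $\nu\in A_j^{2/3}$ has $\nu(F)\ge\nu(A_j)\ge 2/3$, so $A_j^{2/3}\subseteq F^t$ and $\A(F^t)\ge 2/3\ge t$ for every $t\le 2/3$; together with the bound above this gives $\mu_X(\A)(F)=2/3=\nu_0(F)$. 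If instead no $A_j$ is contained in $F$ (so $\nu_0(F)=0$), then for each $i$ choose $x_i\in A_i\setminus F$: since $\delta_{x_i}(A_i)=1$ and $\delta_{x_i}(F)=0$, we have $\delta_{x_i}\in A_i^{2/3}\setminus F^t$ for every $t>0$, so no $A_i^{2/3}$ is contained in $F^t$; with $F^t\ne MBX$ this forces $\A(F^t)=0<t$ for all $t>0$, whence $\mu_X(\A)(F)=0=\nu_0(F)$. As this covers every closed $F$, the lemma follows. I expect the main obstacle to be purely organisational: setting up the reduction $\mu_X(\A)(F)=\sup\{t\mid\A(F^t)\ge t\}$ carefully (interpreting $\mu_X$ on an element of $M(MBX)$ through the closed embedding $MBX\hookrightarrow MX$) and checking that $\nu_0$ and $\A$ genuinely are the u.s.c.\ capacities with the stated values; granting these, the case analysis is routine.
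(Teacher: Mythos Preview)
Your proof is correct and follows essentially the same approach as the paper: the same three-case analysis on $F$ (namely $F=X$, $F\ne X$ containing some $A_j$, and $F$ meeting no $A_j$), the same use of Dirac measures $\delta_x\in MBX$ to witness that the relevant sets $F^t$ are proper and that $A_i^{2/3}\not\subseteq F^t$ when $A_i\not\subseteq F$, and the same inequality $A_j^{2/3}\subseteq F^{t}$ for $t\le 2/3$ when $A_j\subseteq F$. Your write-up is more explicit than the paper's (you spell out the minimal-capacity formulas, the embedding $MBX\hookrightarrow MX$, and why $\delta_x$ is balanced), but the underlying argument is the same.
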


\begin{proof} We have evidently $\mu_X(\A)(X)=1$.

Consider any proper closed  subset $B\subset X$ such that $B\supset A_i$ for some $i\in\{1,2,3,4\}$. We have $\mu_X(\A)(B)=\sup\{t\in[0,1]\mid \A(B^{t})\ge t\}$. If $t=2/3$, we obtain $\A(B^{2/3})\ge\A(A_i^{2/3})=2/3$. Consider $t>2/3$. Take any $x_0\in X\setminus B$. Then $MBX\setminus B^{t}$ contains Dirac measure  $\delta_{x_0}$, thus $B^t\neq MBX$ and $\A(B^{t})\le 2/3<t$. Hence  $\mu_X(\A)(B)=2/3$.

Now, consider a closed subset $C\subset X$ such that there exists $x_i\in A_i\setminus C$ for each $i\in\{1,2,3,4\}$. Take any $t>0$. Then $\delta_{x_i}\in A_i^{2/3}\setminus C^t$ for each $i\in\{1,2,3,4\}$. Hence $\A(C^t)=0$ and $\mu_X(\A)(C)=0$.
\end{proof}

Take any $l\in\{1,2,3,4\}$ and put $K_l=\{1,2,3,4\}\setminus \{l\}$. Let $\nu_l\in MX$ be a minimal capacity such that  $\nu_l(A_j)=2/3$ for each $j\in K_l$.

\begin{lemma}\label{dwa} The capacity $\nu_l$ is balanced.
\end{lemma}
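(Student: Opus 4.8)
The plan is to verify Definition \ref{bal} directly for $\nu_l$, exploiting the combinatorial structure of the sets $A_1,\dots,A_4$ and the minimality of $\nu_l$. The key point is that when we drop one of the four sets, say $A_l$, the remaining three sets $\{A_j \mid j \in K_l\}$ no longer cover $X$ with multiplicity $2$ everywhere: each point of $X$ lies in exactly two of $A_1,A_2,A_3,A_4$, so a point $x$ with $l \notin \{j \mid x \in A_j\}$ lies in two sets of $K_l$, but a point $x$ with $l \in \{j \mid x \in A_j\}$ lies in only one set of $K_l$. Since every pair $\{i,j\} \subset \{1,2,3,4\}$ is realized by exactly one point of $X$ (this is the structure of the sets: $1 \in A_1 \cap A_2$, $2 \in A_1 \cap A_3$, $3 \in A_1 \cap A_4$, $4 \in A_2 \cap A_4$, $5 \in A_2 \cap A_3$, $6 \in A_3 \cap A_4$), exactly three points lie in two sets of $K_l$ and three points lie in only one. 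So $\sum_{j \in K_l} \chi_{A_j}$ takes the value $2$ at three points and $1$ at three points, and in particular $\sum_{j \in K_l} \tfrac12 \chi_{A_j} \le 1_X$ with $\sum_{j \in K_l} \tfrac12 \nu_l(A_j) = \tfrac12 \cdot 3 \cdot \tfrac23 = 1 \le 1$, which is the ``obvious'' test case.

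The substance is to show that no choice of coefficients and closed sets violates balancedness. First I would describe $\nu_l$ explicitly: being the minimal capacity with $\nu_l(A_j) = 2/3$ for $j \in K_l$, it satisfies $\nu_l(F) = 2/3$ if $F \supseteq A_j$ for some $j \in K_l$ but $F \ne X$, $\nu_l(X) = 1$, and $\nu_l(F) = 0$ otherwise. Now take positive $\lambda_1,\dots,\lambda_n$ and closed $F_1,\dots,F_n$ with $\sum_i \lambda_i \chi_{F_i} \le 1_X$; we must show $\sum_i \lambda_i \nu_l(F_i) \le 1$. Only those $F_i$ with $\nu_l(F_i) > 0$ matter, i.e. those equal to $X$ or containing some $A_j$, $j \in K_l$. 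Group the indices accordingly and bound the contributions. The contribution of an $F_i = X$ forces $\lambda_i \le 1$ and kills everything else, so assume no $F_i = X$; then each relevant $F_i$ contains some $A_{j(i)}$ with $j(i) \in K_l$ and contributes $\tfrac23 \lambda_i$. Summing $\lambda_i \chi_{A_{j(i)}} \le \lambda_i \chi_{F_i}$ over the relevant indices and evaluating at a well-chosen point, one gets a bound forcing $\sum \tfrac23 \lambda_i \le 1$. Concretely: for each $j \in K_l$ let $S_j = \sum \{\lambda_i \mid F_i \supseteq A_j\}$ (so $\sum_{j\in K_l} \tfrac23 S_j \ge \sum_i \lambda_i\nu_l(F_i)$, with equality achievable only if the $A_j$-containments are disjoint); evaluating $\sum_i \lambda_i\chi_{F_i} \le 1_X$ at the point lying in $A_{j_1} \cap A_{j_2}$ for the two indices $j_1,j_2$ of $K_l$ gives $S_{j_1} + S_{j_2} \le 1$ for each of the three pairs from $K_l$, hence $S_{j_1}+S_{j_2}+S_{j_3} \le 3/2$, whence $\sum_i\lambda_i\nu_l(F_i) \le \tfrac23 \cdot \tfrac32 = 1$.

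The main obstacle is the bookkeeping in the last step: a single $F_i$ may contain several of the $A_j$ simultaneously, so the naive overcount $\sum_i\lambda_i\nu_l(F_i) \le \sum_{j\in K_l}\tfrac23 S_j$ could in principle be loose, and one must check the inequality $\sum_{j \in K_l} (S_{j_1}+S_{j_2}) \le \tfrac92$ summed over pairs (which gives $2\sum_j S_j \le 3$, i.e. $\sum_j S_j \le 3/2$) is still enough even allowing for the overcount, since each $\lambda_i$ with $F_i$ containing $k \ge 1$ of the $A_j$'s contributes $\tfrac23\lambda_i$ to the left side of the target inequality but $\tfrac23 k \lambda_i$ to $\sum_j \tfrac23 S_j$; as $k\ge 1$ this only helps, so $\sum_i \lambda_i\nu_l(F_i) \le \tfrac23 \sum_{j\in K_l} S_j \le 1$. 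I would also need to handle arbitrary real (not just nonnegative) $\lambda_i$ as in Definition \ref{bal}, but negative coefficients only decrease the left-hand sum while the constraint $\sum_i\lambda_i\chi_{F_i}\le 1_X$ is only made easier to satisfy by dropping them, so the reduction to positive coefficients is immediate.
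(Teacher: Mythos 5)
Your overall strategy --- keep only the sets with $\nu_l>0$, peel off the weight sitting on $X$ itself, and use the pairwise intersections of the three sets $A_j$, $j\in K_l$, to bound the remaining weight by $3/2$ --- is the same as the paper's. But the key combinatorial step is broken by your choice of $S_j=\sum\{\lambda_i\mid F_i\supseteq A_j\}$: the claimed inequality $S_{j_1}+S_{j_2}\le 1$ does \emph{not} follow from evaluating $\sum_i\lambda_i\chi_{F_i}\le 1_X$ at the point of $A_{j_1}\cap A_{j_2}$, because a single $F_i$ containing both $A_{j_1}$ and $A_{j_2}$ is counted once in that evaluation but twice in $S_{j_1}+S_{j_2}$. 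Concretely, take $l=4$ (so $K_l=\{1,2,3\}$), a single set $F_1=A_1\cup A_2=\{1,2,3,4,5\}$ with $\lambda_1=1$: then $\sum_i\lambda_i\chi_{F_i}=\chi_{F_1}\le 1_X$, but $S_1=S_2=1$ and $S_3=0$, so $S_1+S_2=2>1$ and $\sum_{j\in K_4}S_j=2>3/2$; your chain $\sum_i\lambda_i\nu_l(F_i)\le\frac23\sum_jS_j\le 1$ fails at the second inequality (the conclusion $\sum_i\lambda_i\nu_l(F_i)=2/3\le 1$ is of course still true here). Your closing remark about overcounting addresses the wrong inequality: the overcount is harmless in $\sum_i\lambda_i\nu_l(F_i)\le\frac23\sum_jS_j$, but it is exactly what destroys the bound $\sum_jS_j\le 3/2$.

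The repair is the paper's device: for each relevant $F_i$ \emph{choose one} index $j(i)\in K_l$ with $F_i\supseteq A_{j(i)}$ and set $\varsigma_j=\sum_{j(i)=j}\lambda_i$, so each $\lambda_i$ is counted exactly once. Then $\sum_{j\in K_l}\varsigma_j\chi_{A_j}\le\sum_i\lambda_i\chi_{F_i}\le b_X$ (where $a$ is the total weight on copies of $X$ and $b=1-a$), the pairwise bounds $\varsigma_{j_1}+\varsigma_{j_2}\le b$ are genuine, summing over the three pairs gives $\sum_j\varsigma_j\le\frac32 b$, and $\sum_i\lambda_i\nu_l(F_i)=a+\frac23\sum_j\varsigma_j\le a+b=1$. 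Two smaller points: your reduction ``an $F_i=X$ kills everything else'' should be replaced by the scaling with $a$ and $b=1-a$ just described; and dropping negative coefficients does \emph{not} make the constraint $\sum_i\lambda_i\chi_{F_i}\le 1_X$ easier to satisfy --- it enlarges the left-hand side --- although the paper itself also only verifies the condition for positive coefficients, so this last issue is shared with the source.
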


\begin{proof}

 Consider any positive numbers $\beta_1,\dots,\beta_s\in\R$ and closed subsets $C_1,\dots,C_s$ of $X$ such that $\sum_{i=1}^s\beta_i\chi_{C_i}\le 1_{X}$. Represent $\{1,\dots,s\}=M\bigsqcup P\bigsqcup T$ where $M=\{i\in\{1,\dots,s\}\mid C_i=X\}$, $P=\{i\in\{1,\dots,s\}\mid C_i\neq X$ and there exists $j\in K_l$ such that $C_i\supset A_j\}$ and $T=\{i\in\{1,\dots,s\}\mid C_i\nsupseteq A_j$ for each $j\in K_l\}$. Then we have  $\nu_l(C_i)=0$ for each $i\in T$ and $\nu_l(C_i)=1$ for each $i\in M$.

 We can choose $a\in [0,1]$ such that $\sum_{i\in M}\beta_i\chi_{C_i}= a_{X}$. Put $b=1-a\in [0,1]$.
 For each $i\in P$ we choose a number $j(i)\in K_l$ such that $C_i\supset A_{j(i)}$ and denote $P_j=\{i\in P\mid j(i)=j\}$. Put $\varsigma_j=\sum_{i\in P_j}\beta_i$ for $j\in L$. If $M_j=\emptyset$, we set $\varsigma_j=0$.  Then we have $\sum_{j\in K_l}\varsigma_j\chi_{A_j}\le\sum_{i\in M}\beta_i\chi_{C_i}\le b_{X}$.

 Since $A_s\cap A_j\ne\emptyset$ for each $s$, $j\in K_l$ with $s\neq j$, we have $\varsigma_s+\varsigma_j\le b$. Taking sum on all pairs $(j;s)$ with $s\neq j$, we obtain  $2\sum_{j\in K_l}\varsigma_j\le 3b$. Then we have  $\sum_{i=1}^s\beta_i\nu_l(C_i)=\sum_{i\in M}\beta_i\nu_l(C_i)+\sum_{i\in P}\beta_i\nu_l(C_i)=a+\sum_{j\in K_l}\varsigma_j\cdot2/3\le a+2/3\cdot3/2b=1$.
\end{proof}

\begin{lemma}\label{try} The capacity $\A$ is balanced.
\end{lemma}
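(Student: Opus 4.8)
The plan is to imitate the proof of Lemma \ref{dwa}, combining an explicit description of the minimal capacity $\A$ with the balancedness of $\nu_1,\dots,\nu_4$ proved there. So first I would record the values of $\A$: each $A_i^{2/3}$ is a \emph{proper} closed subset of $MBX$, since it misses the balanced capacity $\delta_x$ for any $x\in X\setminus A_i$, and therefore the minimal capacity with $\A(A_i^{2/3})=2/3$ for all $i$ is $\A(\mathcal F)=1$ if $\mathcal F=MBX$, $\A(\mathcal F)=2/3$ if $\mathcal F\ne MBX$ and $\mathcal F\supseteq A_i^{2/3}$ for some $i\in\{1,2,3,4\}$, and $\A(\mathcal F)=0$ otherwise. (That this formula defines an upper-semicontinuous capacity on the compactum $MBX$ is routine, the list of constraints being finite; cf. \cite{NZ}.) In particular $\A$ takes only the values $0,2/3,1$.

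Next I would test balancedness in the style of Lemma \ref{dwa}. Take positive numbers $\gamma_1,\dots,\gamma_m$ and closed subsets $\mathcal C_1,\dots,\mathcal C_m$ of $MBX$ with $\sum_{k=1}^m\gamma_k\chi_{\mathcal C_k}\le 1_{MBX}$, and split $\{1,\dots,m\}=M\sqcup P\sqcup T$ according to whether $\A(\mathcal C_k)$ equals $1$, $2/3$ or $0$. Put $a=\sum_{k\in M}\gamma_k$; discarding the nonnegative terms with $k\notin M$ gives $a\,1_{MBX}\le 1_{MBX}$, so $a\le 1$, and put $b=1-a$. For each $k\in P$ choose $i(k)$ with $\mathcal C_k\supseteq A_{i(k)}^{2/3}$, set $P_i=\{k\in P\mid i(k)=i\}$ and $\varsigma_i=\sum_{k\in P_i}\gamma_k$ (with $\varsigma_i=0$ when $P_i=\emptyset$). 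Then $\sum_{k=1}^m\gamma_k\A(\mathcal C_k)=a+\frac{2}{3}\sum_{i=1}^4\varsigma_i$, so it suffices to show $\sum_{i=1}^4\varsigma_i\le\frac{4}{3}b$: this yields $\sum_{k=1}^m\gamma_k\A(\mathcal C_k)\le a+\frac{2}{3}\cdot\frac{4}{3}b=a+\frac{8}{9}b=\frac{8}{9}+\frac{1}{9}a\le 1$.

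To prove $\sum_{i=1}^4\varsigma_i\le\frac{4}{3}b$ I would first note, discarding the terms with $k\in M\cup T$ and using $\chi_{\mathcal C_k}\ge\chi_{A_{i(k)}^{2/3}}$ for $k\in P$, that $\sum_{i=1}^4\varsigma_i\chi_{A_i^{2/3}}\le b\cdot 1_{MBX}$. Now, for each $l\in\{1,2,3,4\}$, the capacity $\nu_l$ is balanced by Lemma \ref{dwa} and satisfies $\nu_l(A_j)=2/3$ for every $j\in K_l$, so $\nu_l\in\bigcap_{j\in K_l}A_j^{2/3}$; evaluating the previous inequality at the point $\nu_l$ gives $\sum_{j\in K_l}\varsigma_j\le b$. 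Summing these four inequalities and using that each $i\in\{1,2,3,4\}$ lies in exactly three of the sets $K_1,\dots,K_4$, we obtain $3\sum_{i=1}^4\varsigma_i\le 4b$, as needed.

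The main obstacle is making this estimate sharp enough. Replacing the triple-intersection points $\nu_l$ by the pairwise intersections $A_i^{2/3}\cap A_j^{2/3}$, which are also nonempty, would give only the six bounds $\varsigma_i+\varsigma_j\le b$ and hence $\sum_i\varsigma_i\le 2b$ — too weak, since then $\sum_k\gamma_k\A(\mathcal C_k)\le a+\frac{4}{3}b$ may exceed $1$. What is genuinely needed is that the \emph{triple} intersections $\bigcap_{j\in K_l}A_j^{2/3}$ are nonempty, i.e. that the $\nu_l$ are balanced, which is precisely Lemma \ref{dwa}; moreover $\bigcap_{i=1}^4 A_i^{2/3}=\emptyset$, because any $\nu$ there would have $\sum_{i=1}^4\frac{1}{2}\nu(A_i)\ge\frac{4}{3}>1$ although $\sum_{i=1}^4\frac{1}{2}\chi_{A_i}=1_X$. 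Thus the four triple inequalities are the binding constraints, and their sum delivers exactly the constant $\frac{4}{3}$ that makes $a+\frac{2}{3}\sum_i\varsigma_i\le 1$.
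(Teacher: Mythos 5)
Your proof is correct and follows essentially the same route as the paper: the same three-way decomposition of the index set according to the value of $\A$, the same reduction to the inequality $\sum_{j\in K_l}\varsigma_j\le b$ obtained by evaluating at the points $\nu_l\in\bigcap_{j\in K_l}A_j^{2/3}$ supplied by Lemma \ref{dwa}, and the same summation over $l$ giving $3\sum_i\varsigma_i\le 4b$ and hence $a+\frac{2}{3}\cdot\frac{4}{3}b\le 1$. The explicit description of $\A$ and the closing remarks on why the triple (rather than pairwise) intersections are the binding constraints are useful elaborations but do not change the argument.
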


\begin{proof}  Take any positive numbers $\lambda_1,\dots,\lambda_k\in\R$ and closed subsets $B_1,\dots,B_k$ of $MBX$ such that $\sum_{i=1}^k\lambda_i\chi_{B_i}\le 1_{MBX}$. Represent $\{1,\dots,k\}=L\bigsqcup M\bigsqcup S$ where $L=\{i\in\{1,\dots,k\}\mid B_i=MBX\}$, $M=\{i\in\{1,\dots,k\}\mid B_i\neq MBX$ and there exists $j\in\{1,\dots,4\}$ such that $B_i\supset A_j^{2/3}\}$ and $S=\{i\in\{1,\dots,k\}\mid B_i\nsupseteq A_j^{2/3}$ for each $j\in\{1,\dots,4\}\}$. Then we have  $\A(B_i)=0$ for each $i\in S$ and $\A(B_i)=1$ for each $i\in L$.

We can choose $a\in [0,1]$ such that $\sum_{i\in L}\lambda_i\chi_{B_i}= a_{X}$. Put $b=1-a\in [0,1]$.
For each $i\in M$ we choose a number $j(i)\in\{1,2,3,4\}$ such that $B_i\supset A_{j(i)}^{2/3}$ and denote $M_j=\{i\in M\mid j(i)=j\}$. Put $\eta_j=\sum_{i\in M_j}\lambda_i$ for $j\in\{1,2,3,4\}$. If $M_j=\emptyset$, we set $\eta_j=0$.  Then we have $\sum_{j=1}^4\eta_j\chi_{A_j^{2/3}}\le\sum_{i\in M}\lambda_i\chi_{B_i}\le b_{MBX}$.

 Since $\nu_l\in\cap_{j\in K_l}A_{j}^{2/3}$ for each $l\in\{1,2,3,4\}$ by Lemma \ref{dwa}, we have $\sum_{j\in K_l}\eta_j\le b$. Taking sum over all  $l\in\{1,2,3,4\}$, we obtain  $3\sum_{j\in K_l}\eta_j\le 4b$. Then we have  $\sum_{i=1}^k\lambda_i\A(B_i)=\sum_{i\in L}\lambda_i\A(B_i)+\sum_{i\in M}\lambda_i\A(B_i)=a+2/3\sum_{i\in M}\lambda_i=a+2/3\sum_{j\in \{1,2,3,4\}}\eta_j\le a+2/3\cdot4/3b\le1$.

\end{proof}

So, we have $\A\in MB(MBX)$ and $\mu X(\A)\notin MBX$. Hence $MB$ is not a submonad of the monad $\M$.

\section{Characterizing of fuzzy integrals generated by balanced capacities}\label{equil}

We characterize the Choquet integral and the t-normed integrals with respect to balanced capacities in this section.

Denote $\varphi_t=\varphi^{-1}([t,+\infty))$ for each $\varphi\in C(X,[0,+\infty))$ and $t\in[0,+\infty)$.

 We consider for a compactum $X$ and  for a  function $f\in  C(X,[0,+\infty))$ an integral respect a capacity $\mu\in MX$ defined by the formula
$$\int_X^{Ch} fd\mu=\int_0^\infty\mu(f_t)dt$$   and call it the Choquet integral \cite{Ch}.

Remind that triangular norm $\ast$ is a binary operation on the closed unit interval $[0,1]$ which is associative, commutative, monotone and $s\ast 1=s$ for each  $s\in [0,1]$ \cite{PRP}. We consider only continuous t-norms in this paper. Integrals generated  by  t-norms are called t-normed integrals and were studied in \cite{We1}, \cite{We2} and \cite{Sua}. So, for a continuous t-norm $\ast$, a capacity $\mu$ and a  function $f\in  C(X,[0,1])$ the corresponding t-normed integral is defined by the formula $$\int_X^{\vee\ast} fd\mu=\max\{\mu(f_t)\ast t\mid t\in[0,1]\}.$$

Let us consider characterizations of Choquet and t-normed integrals. Let $X$ be a compactum.  We call two functions $\varphi$, $\psi\in C(X)$ comonotone (or equiordered) if $(\varphi(x_1)-\varphi(x_2))\cdot(\psi(x_1)-\psi(x_2))\ge 0$ for each $x_1$, $x_2\in X$ \cite{M}.

We denote for a compactum $X$ by $\K(X)$ the set of functionals $\mu:C(X,[0,+\infty))\to[0,+\infty)$ which satisfy the conditions:

\begin{enumerate}
\item $\mu(1_X)=1$;
\item $\mu(\varphi)\le\mu(\psi)$ for each functions $\varphi$, $\psi\in C(X,[0,+\infty))$ such that $\varphi\le\psi$;
\item $\mu(\psi+\varphi)=\mu(\psi)+\mu(\varphi)$ for each comonotone functions $\varphi$, $\psi\in C(X,[0,+\infty))$;
\item $\mu(c\varphi)=c\mu(\varphi)$ for each $c\in[0,+\infty)$ and $\varphi\in C(X,[0,+\infty))$.

\end{enumerate}

It was proved in \cite{CB} for finite compacta and in \cite{Lin} for a general case  that  a   functional $\mu$ on  $C(X,[0,+\infty))$ belongs to $\K(X)$ if and only if there exists a unique capacity $\nu$ such that $\mu$ is the Choquet integral with respect to $\nu$.

Let $\ast$ be continuous t-norm. We denote for a compactum $X$ by $\T(X)$ the set of functionals $\mu:C(X,[0,1])\to[0,1]$ which satisfy the conditions:

\begin{enumerate}
\item $\mu(1_X)=1$;
\item $\mu(\varphi)\le\mu(\psi)$ for each functions $\varphi$, $\psi\in C(X,[0,1])$ such that $\varphi\le\psi$;
\item $\mu(\psi\vee\varphi)=\mu(\psi)\vee\mu(\varphi)$ for each comonotone functions $\varphi$, $\psi\in C(X,[0,1])$;
\item $\mu(c_X\ast\varphi)=c\ast\mu(\varphi)$ for each $c\in\R$ and $\varphi\in C(X,[0,1])$.

\end{enumerate}

It was proved in \cite{CLM} for finite compacta and in \cite{Rad} for a general case  that  a   functional $\mu$ on  $C(X,[0,1])$ belongs to $\T(X)$ if and only if there exists a unique capacity $\nu$ such that $\mu$ is the t-normed integral with respect to $\nu$.

\begin{df} We call a functional $\mu\in\K(X)$ ($\mu\in\T(X)$) balanced if  for any numbers $\lambda_1,\dots,\lambda_n\in\R$ and closed subsets $A_1,\dots,A_n$ of $X$ such that $\sum_{i=1}^n\lambda_i\chi_{A_i}\le 1_X$ there exist functions $\varphi_1,\dots,\varphi_n\in C(X,[0,+\infty))(C(X,[0,1]))$ such that $\chi_{A_i}\le\varphi_i$ for each $i\in\{1,\dots,n\}$ and $\sum_{i=1}^n\lambda_i\mu(\varphi_i)\le 1$.
\end{df}

We denote by $\BK(X)$ ($\BT(X)$) the set of all balanced functional in $\K(X)$ ($\T(X)$).

\begin{theorem} Let $\mu\in \K(X)$. Then $\mu\in \BK(X)$ if and only if there exists $\nu\in MB(X)$ such that  $\mu(\varphi)=\int_X^{Ch} \varphi d\nu$ $\varphi\in C(X,[0,+\infty))$.
\end{theorem}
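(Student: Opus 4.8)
The plan is to establish the two implications separately, using the known correspondence between $\K(X)$ and Choquet integrals of capacities (the results of \cite{CB} and \cite{Lin} cited above).

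For the easy direction, suppose $\mu(\varphi)=\int_X^{Ch}\varphi\,d\nu$ for some $\nu\in MB(X)$. Given $\lambda_1,\dots,\lambda_n\in\R$ and closed sets $A_1,\dots,A_n$ with $\sum_i\lambda_i\chi_{A_i}\le 1_X$, I would invoke Lemma \ref{roz} to pass to closed sets $B_i$ with $A_i\subset\Int B_i$ and $\sum_i\lambda_i\chi_{B_i}\le 1_X$, and then choose continuous $\varphi_i$ with $\chi_{A_i}\le\varphi_i\le\chi_{B_i}$ (Urysohn). Because $\chi_{A_i}\le\varphi_i$, monotonicity and the layer-cake formula give $\nu(A_i)\le\nu((\varphi_i)_t)$ only for $t\le 1$; more directly, $\int_X^{Ch}\varphi_i\,d\nu=\int_0^\infty\nu((\varphi_i)_t)\,dt\ge\int_0^1\nu(A_i)\,dt=\nu(A_i)$ since $(\varphi_i)_t\supset A_i$ for $t\le 1$. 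Hence $\sum_i\lambda_i\mu(\varphi_i)\ge\sum_i\lambda_i\nu(A_i)$ is the \emph{wrong} direction, so instead I would bound from above: $\varphi_i\le\chi_{B_i}$ gives $\mu(\varphi_i)\le\int_0^1\nu(B_i)\,dt=\nu(B_i)$, so $\sum_i\lambda_i\mu(\varphi_i)\le\sum_i\lambda_i\nu(B_i)\le 1$ by balancedness of $\nu$ applied to the $B_i$. This shows $\mu\in\BK(X)$. (One subtlety: the $\lambda_i$ in Definition \ref{bal} range over all of $\R$, so the mixed-sign case must be handled, but splitting into positive and negative indices and using $\chi_{A_i}\le\varphi_i\le\chi_{B_i}$ appropriately on each side takes care of it, exactly as in the necessity argument of the main theorem.)

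For the substantive direction, suppose $\mu\in\BK(X)$. By \cite{CB}/\cite{Lin} there is a unique capacity $\nu\in MX$ with $\mu(\varphi)=\int_X^{Ch}\varphi\,d\nu$; it remains to show $\nu\in MB(X)$. The key point is the identity $\nu(A)=\inf\{\mu(\varphi)\mid \varphi\in C(X,[0,+\infty)),\ \varphi\ge\chi_A\}$ for closed $A$, which follows from the Choquet representation together with upper semicontinuity of $\nu$ (for $\varphi\ge\chi_A$ supported near $A$ one gets $\mu(\varphi)$ close to $\nu(A)$, and $\mu(\varphi)\ge\mu(\chi_A)$-type bounds give the reverse). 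Now take $\lambda_1,\dots,\lambda_n\in\R$ and closed $A_1,\dots,A_n$ with $\sum_i\lambda_i\chi_{A_i}\le 1_X$. For the indices with $\lambda_i>0$, balancedness of $\mu$ supplies $\varphi_i\ge\chi_{A_i}$ with $\sum_i\lambda_i\mu(\varphi_i)\le 1$, and then $\sum_i\lambda_i\nu(A_i)\le\sum_i\lambda_i\mu(\varphi_i)\le 1$ using $\nu(A_i)\le\mu(\varphi_i)$ from the infimum formula. For indices with $\lambda_i<0$ one uses the inequality in the opposite sense, bounding $\nu(A_i)$ from below; here I would choose the $\varphi_i$ for the negative indices to also be $\le 1_X$ so that $\mu(\varphi_i)\le\mu(1_X)=1$ does not wreck the estimate — more carefully, the definition of balanced functional already provides a single family $\varphi_1,\dots,\varphi_n$ for \emph{all} indices at once with $\sum_i\lambda_i\mu(\varphi_i)\le 1$, so one just needs $\nu(A_i)\le\mu(\varphi_i)$ (true from the infimum formula, and it is compatible with the sign of $\lambda_i$ only for $\lambda_i\ge 0$).

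Thus the main obstacle is precisely the mixed-sign bookkeeping: for $\lambda_i<0$ we need an \emph{upper} bound on $\mu(\varphi_i)$ in terms of $\nu(A_i)$, i.e. we need the $\varphi_i$ produced by the balanced-functional definition (or a suitable replacement of them) to satisfy $\mu(\varphi_i)\le\nu(A_i)+\e$ as well. I expect this is handled either by strengthening the definition's output via Lemma \ref{roz} (replace $\varphi_i$ for negative indices by $\chi_{B_i}$-dominated functions, keeping $\sum\lambda_i\chi_{\cdot}\le 1_X$), or by first reducing Definition \ref{bal} to the case of nonnegative $\lambda_i$ — which is legitimate because a term with $\lambda_i<0$ only helps the inequality $\sum\lambda_i\chi_{A_i}\le 1_X$ and can be dropped, after which one re-adds it using $\nu(A_i)\ge 0$. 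With that reduction, only the $\lambda_i>0$ case matters and the argument of the previous paragraph closes the proof. Finally, uniqueness of $\nu$ is inherited verbatim from \cite{CB}/\cite{Lin}, so nothing extra is needed there.
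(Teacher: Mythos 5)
Your proposal follows essentially the same route as the paper in both directions: for sufficiency, Lemma \ref{roz} plus Urysohn functions $\chi_{A_i}\le\varphi_i\le\chi_{B_i}$ and the layer-cake bound $\mu(\varphi_i)\le\nu(B_i)$, then balancedness of $\nu$ applied to the $B_i$; for necessity, Lin's identity $\nu(A)=\inf\{\mu(\varphi)\mid\varphi\ge\chi_A\}$ applied to the functions supplied by the balanced-functional definition. One caveat on your mixed-sign digression: the reduction you sketch (dropping terms with $\lambda_i<0$) does not work as stated, since removing a negative term can break the constraint $\sum_i\lambda_i\chi_{A_i}\le 1_X$; the paper sidesteps the issue entirely by in effect taking all $\lambda_i$ non-negative (as it does explicitly in every other balancedness argument), and with that convention your argument coincides with the paper's.
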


\begin{proof} Necessity.  We can choose any $\nu\in M(X)$ such that  $\mu(\varphi)=\int_X^{Ch} \varphi d\nu$ by the above mentioned characterization of the Choquet integral from \cite{Lin}. Moreover, we have $\nu(A)=\inf \{\mu(\varphi)\mid \varphi\in C(X,[0,+\infty))$ with $\varphi\ge\chi_{A}\}$ for each closed subset $A$ of $X$ \cite{Lin}. We have to show that $\nu\in MB(X)$.

Consider any numbers $\lambda_1,\dots,\lambda_n\in\R$ and closed subsets $A_1,\dots,A_n$ of $X$ such that $\sum_{i=1}^n\lambda_i\chi_{A_i}\le 1_X$. Since $\mu\in \BK(X)$, we can choose functions $\varphi_1,\dots,\varphi_n\in C(X,[0,+\infty))$ such that $\chi_{A_i}\le\varphi_i$ for each $i\in\{1,\dots,n\}$ and $\sum_{i=1}^n\lambda_i\mu(\varphi_i)\le 1$. Then we have $\sum_{i=1}^n\lambda_i\nu(A_i)=\sum_{i=1}^n\lambda_i\inf \{\mu(\varphi)\mid \varphi\ge\chi_{A_i}\}\le\sum_{i=1}^n\lambda_i\mu(\varphi_i)\le 1$.

Sufficiency. Let $\nu\in MB(X)$ such that  $\mu(\varphi)=\int_X^{Ch} \varphi d\nu$. We have to show that $\mu\in \BK(X)$. Consider any numbers $\lambda_1,\dots,\lambda_n\in\R$ and closed subsets $A_1,\dots,A_n$ of $X$ such that $\sum_{i=1}^n\lambda_i\chi_{A_i}\le 1_X$. By Lemma \ref{roz} we can choose closed subsets  $B_1,\dots,B_n$ of $X$ such that $\sum_{i=1}^n\lambda_i\chi_{B_i}\le 1_X$ and $A_i\subset \Int B_i$. Choose functions $\varphi_1,\dots,\varphi_n\in C(X,[0,+\infty))$ such that $\chi_{A_i}\le\varphi_i\le\chi_{B_i}$ for each $i\in\{1,\dots,n\}$. Then we have $\sum_{i=1}^n\lambda_i\mu(\varphi_i)=\sum_{i=1}^n\lambda_i\int_0^\infty\nu((\varphi_i)_t)dt\le\sum_{i=1}^n\lambda_i\nu(B_i)\le 1$.
\end{proof}

Analogously we can prove a characterization theorem for t-normed integrals.

\begin{theorem} Let $\mu\in \T(X)$. Then $\mu\in \BT(X)$ if and only if there exists $\nu\in MB(X)$ such that  $\mu(\varphi)=\int_X^{\vee\ast} \varphi d\nu$ for each $\varphi\in C(X,[0,1])$.
\end{theorem}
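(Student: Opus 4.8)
The plan is to mirror the proof of the Choquet-integral characterization theorem almost verbatim, replacing the Choquet integral $\int_0^\infty\nu(\varphi_t)\,dt$ with the t-normed integral $\max\{\nu(\varphi_t)\ast t\mid t\in[0,1]\}$ and using the t-normed analogue of the representation result from \cite{Rad} in place of the one from \cite{Lin}. First I would prove necessity: given $\mu\in\BT(X)$, the characterization from \cite{Rad} yields a unique $\nu\in M(X)$ with $\mu(\varphi)=\int_X^{\vee\ast}\varphi\,d\nu$, and moreover $\nu(A)=\inf\{\mu(\varphi)\mid \varphi\in C(X,[0,1]),\ \varphi\ge\chi_A\}$ for each closed $A$. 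Then, given $\lambda_1,\dots,\lambda_n\in\R$ and closed $A_1,\dots,A_n$ with $\sum_i\lambda_i\chi_{A_i}\le 1_X$, balancedness of $\mu$ supplies $\varphi_i\in C(X,[0,1])$ with $\chi_{A_i}\le\varphi_i$ and $\sum_i\lambda_i\mu(\varphi_i)\le 1$, and the infimum formula gives $\sum_i\lambda_i\nu(A_i)\le\sum_i\lambda_i\mu(\varphi_i)\le 1$, so $\nu\in MB(X)$.

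For sufficiency I would argue as follows. Suppose $\nu\in MB(X)$ and $\mu(\varphi)=\int_X^{\vee\ast}\varphi\,d\nu$. Take $\lambda_1,\dots,\lambda_n\in\R$ and closed $A_1,\dots,A_n$ with $\sum_i\lambda_i\chi_{A_i}\le 1_X$; by Lemma \ref{roz} pick closed $B_1,\dots,B_n$ with $\sum_i\lambda_i\chi_{B_i}\le 1_X$ and $A_i\subset\Int B_i$, and by the Urysohn lemma choose $\varphi_i\in C(X,[0,1])$ with $\chi_{A_i}\le\varphi_i\le\chi_{B_i}$. The key estimate is $\mu(\varphi_i)=\max\{\nu((\varphi_i)_t)\ast t\mid t\in[0,1]\}\le\nu(B_i)$: indeed for $t>0$ we have $(\varphi_i)_t\subset B_i$ (since $\varphi_i\le\chi_{B_i}$ forces $(\varphi_i)_t\subset\varphi_i^{-1}((0,1])\subset B_i$), hence $\nu((\varphi_i)_t)\le\nu(B_i)$ and $\nu((\varphi_i)_t)\ast t\le\nu(B_i)\ast t\le\nu(B_i)$ by monotonicity of $\ast$ and $s\ast 1=s\ge s\ast t$; for $t=0$, $\nu((\varphi_i)_0)\ast 0=1\ast 0=0$ (using $0\ast 1=0$, monotonicity). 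Thus $\sum_i\lambda_i\mu(\varphi_i)\le\sum_i\lambda_i\nu(B_i)\le 1$ since $\nu$ is balanced, so $\mu\in\BT(X)$.

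The main obstacle, and the only place where the t-normed case genuinely differs from the Choquet case, is the monotone estimate $\nu(f_t)\ast t\le\nu(B)$ whenever $f_t\subset B$ for all relevant $t$: one must be slightly careful at the endpoints $t=0$ and $t=1$ and must invoke the continuity and the boundary behaviour $s\ast 1=s$, $0\ast t=0$ of the t-norm rather than a linear bound. Once this lemma-style inequality is in hand the rest is a routine transcription of the preceding proof, so I would phrase the write-up as ``Analogously, using the representation from \cite{Rad} and the inequality $\nu(f_t)\ast t\le\nu(B)$ for $f\le\chi_B$ in place of $\int_0^\infty\nu(f_t)\,dt\le\nu(B)$, one repeats the argument of the previous theorem,'' spelling out only that single inequality in detail.
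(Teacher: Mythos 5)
Your proposal is correct and is precisely the "analogous" argument the paper intends: it transcribes the Choquet-case proof, substituting the representation result of \cite{Rad} for that of \cite{Lin} and replacing the estimate $\int_0^\infty\nu(f_t)\,dt\le\nu(B)$ by the inequality $\nu(f_t)\ast t\le\nu(B)$ for $f\le\chi_B$, which you verify correctly (including the endpoint $t=0$ via $1\ast 0=0$). The paper itself gives no written proof beyond the word "analogously," so spelling out that single inequality is exactly the right level of detail.
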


\end{document}